\documentclass[12pt,twoside]{amsart}
\usepackage{amsmath, amsthm, amscd, amsfonts, amssymb, graphicx}
\usepackage{enumerate}
\usepackage[colorlinks=true,
linkcolor=blue,
urlcolor=cyan,
citecolor=red]{hyperref}
\usepackage{mathrsfs}
\addtolength{\topmargin}{-1.5cm}
\linespread {1.3}
\textwidth 17cm
\textheight 23cm
\addtolength{\hoffset}{-0.3cm}
\oddsidemargin 0cm
\evensidemargin 0cm
\setcounter{page}{1}
%------------------------------------------------------------------------------------%
\newtheorem{theorem}{Theorem}[section]
\newtheorem{lemma}{Lemma}[section]
\newtheorem{remark}{Remark}[section]

\newtheorem{corollary}{Corollary}[section]
\newtheorem{example}{Example}[section]
\newtheorem{proposition}{Proposition}[section]
\numberwithin{equation}{section}

%------------------------------------------------------------------------------------%

\begin{document}
\title{Advanced Refinements of Numerical Radius Inequalities}
\author{Farzaneh Pouladi Najafabadi and Hamid Reza Moradi}
\subjclass[2010]{Primary 47A12, 47A30. Secondary 15A60.}
\keywords{Numerical radius, operator norm, inequality, positive operator.}

\begin{abstract}
We prove several numerical radius inequalities for linear operators in Hilbert spaces. It is shown, among other inequalities, that if $A$ is a bounded linear operator on a complex Hilbert space, then
\[\omega \left( A \right)\le \frac{1}{2}\sqrt{\left\| {{\left| A \right|}^{2}}+{{\left| {{A}^{*}} \right|}^{2}} \right\|+\left\| \left| A \right|\left| {{A}^{*}} \right|+\left| {{A}^{*}} \right|\left| A \right| \right\|},\]
where $\omega \left( A \right)$, $\left\| A \right\|$, and $\left| A \right|$ are the numerical radius, the usual operator norm, and  the absolute value of  $A$, respectively. This inequality provides a refinement of an earlier numerical radius inequality due to Kittaneh, namely,
\[\omega \left( A \right)\le \frac{1}{2}\left( \left\| A \right\|+{{\left\| {{A}^{2}} \right\|}^{\frac{1}{2}}} \right).\]
Some related inequalities are also discussed.
\end{abstract}
\maketitle
%------------------------------------------------------------------------------------%
\pagestyle{myheadings}
\markboth{\centerline {Advanced Refinements of Numerical Radius Inequalities}}
{\centerline {F. P. Najafabadi and H. R. Moradi}}
\bigskip
\bigskip

\section{Introduction}
In this section, we introduce the notations and necessary prerequisites.
Let $\mathbb{B}\left( \mathscr{H} \right)$ denote the ${{C}^{*}}$-algebra of all bounded linear operators on a complex Hilbert space $\mathscr{H}$ with inner product $\left\langle \cdot,\cdot \right\rangle $. For $A\in \mathbb{B}\left( \mathscr{H} \right)$, let $\omega \left( A \right)$ and $\left\| A \right\|$ denote the numerical radius and the operator norm of $A$, respectively. Recall that $\omega \left( A \right)=\underset{\left\| x \right\|=1}{\mathop{\underset{x\in \mathscr{H}}{\mathop{\sup }}\,}}\,\left| \left\langle Ax,x \right\rangle  \right|$ and $\left\| A \right\|=\underset{\left\| x \right\|=1}{\mathop{\underset{x\in \mathscr{H}}{\mathop{\sup }}\,}}\,\left\| Ax \right\|$. We remark here that if $A\in \mathbb{B}\left( \mathscr{H} \right)$, and if $f$ is a non-negative increasing function on $\left[ 0,\infty  \right)$,
then $\left\| f\left( \left| A \right| \right) \right\|=f\left( \left\| A \right\| \right)$. Here $\left| A \right|$ stands for the positive operator
${{\left( {{A}^{*}}A \right)}^{\frac{1}{2}}}$. \\
It is clear that $\omega \left( \cdot \right)$ defines a norm
on $\mathbb{B}\left( \mathscr{H} \right)$, which is equivalent to the operator norm $\left\| \cdot \right\|$. In fact, for every $A\in \mathbb{B}\left( \mathscr{H} \right)$,
\begin{equation}\label{0}
\frac{1}{2}\left\| A \right\|\le \omega \left( A \right)\le \left\| A \right\|.
\end{equation}
The inequalities in \eqref{0} are sharp. The first inequality becomes an equality if ${{A}^{2}}=0$. The second inequality becomes an equality if $A$ is normal.

An important inequality for $\omega \left( A \right)$ is the power inequality \cite{berger} stating that
\begin{equation}\label{06}
\omega \left( {{A}^{n}} \right)\le {{\omega }^{n}}\left( A \right),
\end{equation}
for $n=1,2,\ldots $. 

In \cite{5}, Kittaneh improved the second inequality in \eqref{0}, and obtained the following result:
\begin{equation}\label{02}
\omega \left( A \right)\le \frac{1}{2}\left( \left\| A \right\|+{{\left\| {{A}^{2}} \right\|}^{\frac{1}{2}}} \right).
\end{equation}
He also showed the following estimate, which is stronger than \eqref{02},
\begin{equation}\label{07}
\omega \left( A \right)\le \frac{1}{2}\left\| \left| A \right|+\left| {{A}^{*}} \right| \right\|.
\end{equation}
Another refinement of the second inequality in \eqref{0} has been  established in \cite{6}. This refinement asserts that if $A\in \mathbb{B}\left( \mathscr{H} \right)$, then 
\begin{equation}\label{04}
{{\omega }^{2}}\left( A \right)\le \frac{1}{2}\left\| {{\left| A \right|}^{2}}+{{\left| {{A}^{*}} \right|}^{2}} \right\|.
\end{equation}
 Also, in the same paper, the author proved that
\begin{equation}\label{01}
\frac{1}{4}\left\| {{\left| A \right|}^{2}}+{{\left| {{A}^{*}} \right|}^{2}} \right\|\le {{\omega }^{2}}\left( A \right).
\end{equation}
 It can be easily seen that \eqref{01} improves the first inequality in \eqref{0}. It should be mentioned here that upper bounds obtained in \eqref{02} and \eqref{04} are not comparable.

Nowadays, a considerable attention is dedicated to refinement and generalization of the previous discussion \cite{m3, m10, m, m1, m2}.

Motivated by the results outlined above, it is the primary goal of the present work to establish new numerical radius inequalities for Hilbert space operators. A comparison of the established results with previously obtained results is demonstrated to show that the results presented in this paper are better than those already exist in literature.\\
In Section \ref{03}, we introduce an inequality that refines both inequalities \eqref{02} and \eqref{04}. Furthermore, we make a refinement of the inequality \eqref{01}. 

To achieve our goal, we need the following lemmas. The first  lemma presented by Kittaneh in \cite[(19)]{2}, while the second  lemma is given in \cite{05}.

\begin{lemma}\label{17}
Let $A,B\in \mathbb{B}\left( \mathscr{H} \right)$ be positive operators, then
\[\left\| A+B \right\|\le \frac{1}{2}\left( \left\| A \right\|+\left\| B \right\|+\sqrt{{{\left( \left\| A \right\|-\left\| B \right\| \right)}^{2}}+4{{\left\| {{A}^{\frac{1}{2}}}{{B}^{\frac{1}{2}}} \right\|}^{2}}} \right).\]
In particular,
\[\left\| {{\left| A \right|}^{2}}+{{\left| {{A}^{*}} \right|}^{2}} \right\|\le \left\| {{A}^{2}} \right\|+{{\left\| A \right\|}^{2}},\]
for any $A\in \mathbb{B}\left( \mathscr{H} \right)$.
\end{lemma}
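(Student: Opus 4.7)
My plan is to first establish the general inequality for positive $A,B$ by reducing to a scalar $2\times 2$ eigenvalue problem, then to derive the particular case by specializing $A\mapsto|A|^{2}$, $B\mapsto|A^{*}|^{2}$. The starting observation is the factorization $A+B=CC^{*}$ with $C=[A^{1/2}\ B^{1/2}]:\mathscr{H}\oplus\mathscr{H}\to\mathscr{H}$ viewed as a row operator. Since $\|CC^{*}\|=\|C^{*}C\|$, proving the bound on $\|A+B\|$ is equivalent to bounding the norm of the positive block operator
\[
C^{*}C=\begin{pmatrix}A & A^{1/2}B^{1/2} \\ B^{1/2}A^{1/2} & B\end{pmatrix}
\]
on $\mathscr{H}\oplus\mathscr{H}$.

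For a unit vector $x\oplus y$, expanding $\langle C^{*}C(x\oplus y),\,x\oplus y\rangle$ gives $\langle Ax,x\rangle + 2\,\mathrm{Re}\langle B^{1/2}A^{1/2}x,y\rangle + \langle By,y\rangle$, and after bounding the cross-term via Cauchy--Schwarz (using $\|B^{1/2}A^{1/2}\|=\|A^{1/2}B^{1/2}\|$) and setting $s=\|x\|$, $t=\|y\|$, the task reduces to computing $\sup_{s^{2}+t^{2}=1}\bigl(\|A\|s^{2}+2\|A^{1/2}B^{1/2}\|st+\|B\|t^{2}\bigr)$. This supremum is the largest eigenvalue of the scalar matrix
\[
\begin{pmatrix}\|A\| & \|A^{1/2}B^{1/2}\| \\ \|A^{1/2}B^{1/2}\| & \|B\|\end{pmatrix},
\]
which, by the standard quadratic-formula computation, equals exactly the claimed right-hand side. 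This passage from the operator block matrix to its scalar majorant is the crux of the argument.

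For the particular case I would substitute $|A|^{2}$ and $|A^{*}|^{2}$ into the general inequality: both have norm $\|A\|^{2}$, so the $(\|A\|-\|B\|)^{2}$ term vanishes, and $(|A|^{2})^{1/2}(|A^{*}|^{2})^{1/2}=|A||A^{*}|$, collapsing the bound to $\|A\|^{2}+\||A||A^{*}|\|$. It then remains to verify the identity $\||A||A^{*}|\|=\|A^{2}\|$; this I would prove by setting $X:=A^{*}|A|$ and noting both $XX^{*}=A^{*}|A|^{2}A=A^{*2}A^{2}$, yielding $\|X\|=\|A^{2}\|$, and $X^{*}X=|A|AA^{*}|A|=(|A||A^{*}|)(|A||A^{*}|)^{*}$, yielding $\|X\|=\||A||A^{*}|\|$. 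The main obstacle I foresee is obtaining the scalar-matrix bound in the second paragraph without losing any constant; once this sharp numerical-range-style estimate is in hand, the rest is routine algebraic manipulation.
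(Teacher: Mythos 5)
Your proof is correct and complete. Note, however, that the paper itself gives no proof of Lemma \ref{17}: it is imported verbatim from Kittaneh's paper \cite[(19)]{2}, so there is no internal argument to compare against. What you have written is essentially Kittaneh's original proof: the factorization $A+B=CC^{*}$ with the row operator $C=\left[ A^{1/2}\ B^{1/2} \right]$, the identity $\left\| CC^{*} \right\|=\left\| C^{*}C \right\|$, and the domination of the positive block operator $C^{*}C$ by the $2\times 2$ scalar matrix of norms, whose largest eigenvalue is exactly the stated bound. Your derivation of the particular case is also sound; in fact the key identity $\left\| \left| A \right|\left| A^{*} \right| \right\|=\left\| A^{2} \right\|$, which you verify via $X=A^{*}\left| A \right|$ and the two computations $XX^{*}=(A^{2})^{*}A^{2}$ and $X^{*}X=(\left| A \right|\left| A^{*} \right|)(\left| A \right|\left| A^{*} \right|)^{*}$, is the same fact the paper invokes without proof later in the derivation of \eqref{15}, so your write-up supplies a detail the paper leaves implicit.
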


\begin{lemma}\label{08}
Let $A\in \mathbb{B}\left( \mathscr{H} \right)$ and let $x,y\in \mathscr H$ be any vector. If $f,g$ are nonnegative continuous functions on $\left[ 0,\infty  \right)$ satisfying $f\left( t \right)g\left( t \right)=t,\left( t\ge 0 \right)$, then
\[\left| \left\langle Ax,y \right\rangle  \right|\le \left\| f\left( \left| A \right| \right)x \right\|\left\| g\left( \left| {{A}^{*}} \right| \right)y \right\|.\]
In particular,
\[\left| \left\langle Ax,y \right\rangle  \right|\le \sqrt{\left\langle {{\left| A \right|}^{2\left( 1-v \right)}}x,x \right\rangle \left\langle {{\left| {{A}^{*}} \right|}^{2v}}y,y \right\rangle },\text{ }\left( 0\le v\le 1 \right).\]
\end{lemma}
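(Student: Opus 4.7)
The plan is to reduce the asserted mixed Cauchy--Schwarz inequality to the ordinary Cauchy--Schwarz inequality in $\mathscr H$ by exploiting the polar decomposition of $A$; this is the classical argument of Kato.

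I would start by writing $A=U|A|$, where $U\in\mathbb{B}(\mathscr H)$ is the partial isometry with initial space $\overline{\operatorname{ran}|A|}$ and final space $\overline{\operatorname{ran}|A^{*}|}$. Since $f,g$ are nonnegative continuous functions on $[0,\infty)$ with $f(t)g(t)=t$, the continuous functional calculus yields the factorisation $|A|=f(|A|)g(|A|)$ as a product of two commuting positive operators. Using the self-adjointness of $g(|A|)$,
\[
\langle Ax,y\rangle=\langle Uf(|A|)g(|A|)x,y\rangle=\langle f(|A|)x,\,g(|A|)U^{*}y\rangle,
\]
so that the ordinary Cauchy--Schwarz inequality in $\mathscr H$ gives
\[
|\langle Ax,y\rangle|\le\|f(|A|)x\|\,\|g(|A|)U^{*}y\|.
\]

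It remains to identify $\|g(|A|)U^{*}y\|$ with $\|g(|A^{*}|)y\|$. Starting from $|A^{*}|^{2}=AA^{*}=U|A|^{2}U^{*}$, the partial isometry $U$ intertwines $|A|$ and $|A^{*}|$ between the corresponding initial and final spaces, so the functional-calculus identity $Ug(|A|)^{2}U^{*}=g(|A^{*}|)^{2}$ holds on the final space of $U$. Hence
\[
\|g(|A|)U^{*}y\|^{2}=\langle Ug(|A|)^{2}U^{*}y,y\rangle=\langle g(|A^{*}|)^{2}y,y\rangle=\|g(|A^{*}|)y\|^{2},
\]
which combined with the previous display gives the first stated inequality. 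The ``in particular'' assertion then follows by specialising to $f(t)=t^{1-v}$ and $g(t)=t^{v}$ with $0\le v\le 1$, and squaring both sides.

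The only delicate step I anticipate is the functional-calculus identity $Ug(|A|)^{2}U^{*}=g(|A^{*}|)^{2}$, since $U$ is only a partial isometry, not a unitary. This is not a genuine obstruction: only vectors of the form $U^{*}y$ enter the computation, and on the initial space of $U$ the partial isometry acts as a unitary intertwiner between the restrictions of $|A|$ and $|A^{*}|$, so the intertwining relation is preserved under the continuous functional calculus applied to $g^{2}$.
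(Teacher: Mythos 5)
The paper does not prove this lemma at all; it quotes it from Kittaneh \cite{05}, and your argument (polar decomposition, the factorisation $|A|=f(|A|)g(|A|)$, Cauchy--Schwarz, and the intertwining of $|A|$ with $|A^{*}|$ by the partial isometry) is precisely the standard proof from that source, so the approach is sound and the specialisation to $f(t)=t^{1-v}$, $g(t)=t^{v}$ is correct. One imprecision worth fixing: since $f(0)g(0)=0$ forces only one of $f(0),g(0)$ to vanish, the identity $Ug(|A|)^{2}U^{*}=g(|A^{*}|)^{2}$ genuinely fails for arbitrary $y$ when $g(0)\neq 0$ (try $g\equiv 1$, $f(t)=t$); the correct statement is $Ug(|A|)^{2}U^{*}=g(|A^{*}|)^{2}-g(0)^{2}\left( I-UU^{*} \right)$, so your displayed chain of equalities should terminate in the inequality $\left\langle Ug(|A|)^{2}U^{*}y,y \right\rangle \le \left\| g(|A^{*}|)y \right\|^{2}$, which points in the right direction and still yields the lemma.
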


\section{Numerical Radius Inequalities}\label{03}
Using the same strategy as in \cite[Theorem 1]{el}, we get the first result, a refinement of inequality \eqref{07}.
\begin{theorem}\label{0000}
Let $A\in \mathbb{B}\left( \mathscr{H} \right)$, then
\begin{equation}\label{29}
\omega \left( A \right)\le \frac{1}{2}~\underset{0\le v\le 1}{\mathop{\min }}\,\left\| {{\left| A \right|}^{2\left( 1-v \right)}}+{{\left| {{A}^{*}} \right|}^{2v}} \right\|.
\end{equation}
\end{theorem}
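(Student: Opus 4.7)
The plan is to start from the mixed Cauchy--Schwarz inequality in Lemma \ref{08} applied to the inner product $\langle Ax,x\rangle$, and then to convert the geometric mean on the right into an arithmetic mean that can be recognized as an inner product of a positive operator.

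More concretely, I would fix a unit vector $x\in\mathscr{H}$ and a parameter $v\in[0,1]$, take $y=x$ in the second part of Lemma \ref{08}, and obtain
\[|\langle Ax,x\rangle|\le \sqrt{\langle |A|^{2(1-v)}x,x\rangle\,\langle |A^{*}|^{2v}x,x\rangle}.\]
Then the AM--GM inequality $\sqrt{ab}\le\tfrac12(a+b)$ applied to the two nonnegative numbers $\langle |A|^{2(1-v)}x,x\rangle$ and $\langle |A^{*}|^{2v}x,x\rangle$ yields
\[|\langle Ax,x\rangle|\le \tfrac12\big\langle\big(|A|^{2(1-v)}+|A^{*}|^{2v}\big)x,x\big\rangle.\]

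Next I would take the supremum over all unit vectors $x$. Since $|A|^{2(1-v)}+|A^{*}|^{2v}$ is a sum of two positive operators, it is itself positive, and for any positive operator $P$ one has $\omega(P)=\|P\|$. Consequently,
\[\omega(A)\le \tfrac12\,\omega\bigl(|A|^{2(1-v)}+|A^{*}|^{2v}\bigr)=\tfrac12\,\bigl\||A|^{2(1-v)}+|A^{*}|^{2v}\bigr\|.\]
Finally, since the parameter $v\in[0,1]$ was arbitrary, I would minimize the right-hand side over $v$ to obtain exactly \eqref{29}.

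I do not expect any serious obstacle: the proof is essentially a two-line application of Lemma \ref{08} combined with AM--GM, with the only nontrivial observation being that the right-hand side of AM--GM can be realized as the diagonal of a positive operator, so that passing from $\omega(\cdot)$ to $\|\cdot\|$ is free. The more delicate discussion, namely verifying that the bound \eqref{29} is genuinely sharper than \eqref{07} (which corresponds to $v=\tfrac12$), would belong to a remark rather than to the proof itself.
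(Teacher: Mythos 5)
Your proposal is correct and follows essentially the same route as the paper's proof: apply Lemma \ref{08} with $y=x$, use AM--GM, recognize the resulting bound as $\tfrac12\langle(|A|^{2(1-v)}+|A^{*}|^{2v})x,x\rangle$, pass to the operator norm, and minimize over $v$. The only cosmetic difference is that you invoke $\omega(P)=\|P\|$ for positive $P$ after taking the supremum, while the paper bounds the quadratic form by the norm before taking the supremum; these are interchangeable steps.
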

\begin{proof}
Let $x\in \mathscr{H}$ be a unit vector. By employing Lemma \ref{08} and the arithmetic-geometric mean inequality, we have
	\[\begin{aligned}
   \left| \left\langle Ax,x \right\rangle  \right|&\le \sqrt{\left\langle {{\left| A \right|}^{2\left( 1-v \right)}}x,x \right\rangle \left\langle {{\left| {{A}^{*}} \right|}^{2v}}x,x \right\rangle } \\ 
 & \le \frac{1}{2}\left( \left\langle {{\left| A \right|}^{2\left( 1-v \right)}}x,x \right\rangle +\left\langle {{\left| {{A}^{*}} \right|}^{2v}}x,x \right\rangle  \right) \\ 
 & =\frac{1}{2}\left\langle \left( {{\left| A \right|}^{2\left( 1-v \right)}}+{{\left| {{A}^{*}} \right|}^{2v}} \right)x,x \right\rangle  \\ 
 & \le\frac{1}{2}\left\| {{\left| A \right|}^{2\left( 1-v \right)}}+{{\left| {{A}^{*}} \right|}^{2v}} \right\|.  
\end{aligned}\]
Thus,
	\[\left| \left\langle Ax,x \right\rangle  \right|\le \frac{1}{2}\left\| {{\left| A \right|}^{2\left( 1-v \right)}}+{{\left| {{A}^{*}} \right|}^{2v}} \right\|.\]
Now, taking the supremum over $x\in \mathscr H$ with $\left\| x \right\|=1$ in the above inequality produces
\begin{equation}\label{00}
\omega \left( A \right)\le \frac{1}{2}\left\| {{\left| A \right|}^{2\left( 1-v \right)}}+{{\left| {{A}^{*}} \right|}^{2v}} \right\|.
\end{equation}
Taking minimum over all $v\in \left[ 0,1 \right]$, we get 
	\[\omega \left( A \right)\le \frac{1}{2}~\underset{0\le v\le 1}{\mathop{\min }}\,\left\| {{\left| A \right|}^{2\left( 1-v \right)}}+{{\left| {{A}^{*}} \right|}^{2v}} \right\|,\]
as required.
\end{proof}

The following example shows that inequality \eqref{29} is a nontrivial improvement of inequality \eqref{07}. It shows that
\[\frac{1}{2}~\underset{0\le v\le 1}{\mathop{\min }}\,\left\| {{\left| A \right|}^{2\left( 1-v \right)}}+{{\left| {{A}^{*}} \right|}^{2v}} \right\|\lneqq \frac{1}{2}\left\| \left| A \right|+\left| {{A}^{*}} \right| \right\|.\]

\begin{example}
Letting $A=\left[ \begin{matrix}
   0 & 1 & 0  \\
   0 & 0 & 2  \\
   0 & 0 & 0  \\
\end{matrix} \right]$. So, $\left| A \right|=\left[ \begin{matrix}
   0 & 0 & 0  \\
   0 & 1 & 0  \\
   0 & 0 & 2  \\
\end{matrix} \right]$ and $\left| {{A}^{*}} \right|=\left[ \begin{matrix}
   1 & 0 & 0  \\
   0 & 2 & 0  \\
   0 & 0 & 0  \\
\end{matrix} \right]$. With a help of Matlab, we get
	\[\frac{1}{2}\left\| \left| A \right|+\left| {{A}^{*}} \right| \right\|=\frac{3}{2},\]
while
	\[\frac{1}{2}~\underset{0\le v\le 1}{\mathop{\min }}\,\left\| {{\left| A \right|}^{2\left( 1-v \right)}}+{{\left| {{A}^{*}} \right|}^{2v}} \right\|\approx1.28.\]
\end{example}

Theorem \ref{0000} admits the following  result.
\begin{corollary}
Let $A\in \mathbb{B}\left( \mathscr{H} \right)$ and let $0\le v\le 1$. Then
\[\omega \left( A \right)\le \frac{1}{4}\left( {{\left\| A \right\|}^{2\left( 1-v \right)}}+{{\left\| A \right\|}^{2v}}+\sqrt{{{\left( {{\left\| A \right\|}^{2\left( 1-v \right)}}-{{\left\| A \right\|}^{2v}} \right)}^{2}}+4{{\left\| {{\left| A \right|}^{1-v}}{{\left| {{A}^{*}} \right|}^{v}} \right\|}^{2}}} \right),\]
\end{corollary}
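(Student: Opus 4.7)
The plan is to combine inequality \eqref{00} (the pre-minimization form of Theorem \ref{0000}) with Lemma \ref{17}, which gives a refined bound for the operator norm of a sum of two positive operators. Since \eqref{00} already provides, for each fixed $v\in[0,1]$,
\[\omega(A)\le \frac{1}{2}\left\|\,|A|^{2(1-v)}+|A^*|^{2v}\,\right\|,\]
the task reduces to bounding the norm on the right by applying Lemma \ref{17} with the positive operators $P:=|A|^{2(1-v)}$ and $Q:=|A^*|^{2v}$.

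To carry this out, I would identify each of the four quantities appearing in Lemma \ref{17}. Using the functional calculus identity $\|f(|A|)\|=f(\|A\|)$ for nonnegative increasing $f$ (recalled in the introduction), with $f(t)=t^{2(1-v)}$ I get $\|P\|=\|A\|^{2(1-v)}$; similarly, since $\|A^*\|=\|A\|$, I get $\|Q\|=\|A^*\|^{2v}=\|A\|^{2v}$. For the cross term, the square roots are $P^{1/2}=|A|^{1-v}$ and $Q^{1/2}=|A^*|^{v}$, so $\|P^{1/2}Q^{1/2}\|=\|\,|A|^{1-v}|A^*|^{v}\,\|$.

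Substituting these four quantities into Lemma \ref{17} yields
\[\left\|\,|A|^{2(1-v)}+|A^*|^{2v}\,\right\|\le \frac{1}{2}\left(\|A\|^{2(1-v)}+\|A\|^{2v}+\sqrt{(\|A\|^{2(1-v)}-\|A\|^{2v})^{2}+4\|\,|A|^{1-v}|A^*|^{v}\,\|^{2}}\right),\]
and combining with the factor $\tfrac{1}{2}$ from \eqref{00} gives exactly the corollary. No minimization over $v$ or supremum argument is needed beyond what is already hidden in \eqref{00}, and there is really no obstacle here: the corollary is a direct two-step consequence, so the only thing to be careful about is correctly matching the square-root factorizations $P^{1/2}$ and $Q^{1/2}$ to the functional calculus expressions $|A|^{1-v}$ and $|A^*|^{v}$.
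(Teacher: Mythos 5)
Your proposal is correct and follows essentially the same route as the paper: the paper likewise applies Lemma \ref{17} to the positive operators $|A|^{2(1-v)}$ and $|A^{*}|^{2v}$, evaluates the norms via the functional calculus identity $\|f(|A|)\|=f(\|A\|)$ together with $\|A^{*}\|=\|A\|$, and then combines the resulting bound with inequality \eqref{00}. The identification of the cross term as $\bigl\||A|^{1-v}|A^{*}|^{v}\bigr\|$ is exactly the step the paper uses as well.
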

\begin{proof}
Let $0\le v\le 1$. We have
\[\begin{aligned}
  & \left\| {{\left| A \right|}^{2\left( 1-v \right)}}+{{\left| {{A}^{*}} \right|}^{2v}} \right\| \\ 
 & \le \frac{1}{2}\left( \left\| {{\left| A \right|}^{2\left( 1-v \right)}} \right\|+\left\| {{\left| {{A}^{*}} \right|}^{2v}} \right\|+\sqrt{{{\left( \left\| {{\left| A \right|}^{2\left( 1-v \right)}} \right\|-\left\| {{\left| {{A}^{*}} \right|}^{2v}} \right\| \right)}^{2}}+4{{\left\| {{\left| A \right|}^{1-v}}{{\left| {{A}^{*}} \right|}^{v}} \right\|}^{2}}} \right) \\ 
 &\qquad \text{(by Lemma \ref{17})}\\
 & =\frac{1}{2}\left( {{\left\| A \right\|}^{2\left( 1-v \right)}}+{{\left\| A \right\|}^{2v}}+\sqrt{{{\left( {{\left\| A \right\|}^{2\left( 1-v \right)}}-{{\left\| A \right\|}^{2v}} \right)}^{2}}+4{{\left\| {{\left| A \right|}^{1-v}}{{\left| {{A}^{*}} \right|}^{v}} \right\|}^{2}}} \right) .
 \end{aligned}\]
So, it follows from the inequality \eqref{00} that
\[\omega \left( A \right)\le \frac{1}{4}\left( {{\left\| A \right\|}^{2\left( 1-v \right)}}+{{\left\| A \right\|}^{2v}}+\sqrt{{{\left( {{\left\| A \right\|}^{2\left( 1-v \right)}}-{{\left\| A \right\|}^{2v}} \right)}^{2}}+4{{\left\| {{\left| A \right|}^{1-v}}{{\left| {{A}^{*}} \right|}^{v}} \right\|}^{2}}} \right),\]
as required.
\end{proof}

\begin{remark}
From \cite[Theorem IX.2.1]{bhatia} we know that, if $A,B\in \mathbb{B}\left( \mathscr{H} \right)$ are positive operators, then
	\[\left\| {{A}^{p}}{{B}^{p}} \right\|\le {{\left\| AB \right\|}^{p}},\text{ }\left( 0\le p\le 1 \right).\]
Based on this inequality, one can easily infer that
\[\begin{aligned}
   \omega \left( A \right)&\le \frac{1}{2}\left( \left\| A \right\|+\left\| {{\left| A \right|}^{\frac{1}{2}}}{{\left| {{A}^{*}} \right|}^{\frac{1}{2}}} \right\| \right) \\ 
 & \le \frac{1}{2}\left( \left\| A \right\|+{{\left\| \left| A \right|\left| {{A}^{*}} \right| \right\|}^{\frac{1}{2}}} \right) \\ 
 & = \frac{1}{2}\left( \left\| A \right\|+{{\left\| {{A}^{2}} \right\|}^{\frac{1}{2}}} \right).  
\end{aligned}\]
\end{remark}

Our second result reads as follows. This result contains our promised refinement of the inequality \eqref{04}. 
\begin{theorem}\label{16}
Let $A\in \mathbb{B}\left( \mathscr{H} \right)$ and let $f,g$ be nonnegative continuous functions on $\left[ 0,\infty  \right)$ satisfying $f\left( t \right)g\left( t \right)=t,\left( t\ge 0 \right)$, then
\begin{equation}\label{8}
\begin{aligned}
   {{\omega }^{2}}\left( A \right)&\le \frac{1}{4}\left\| {{f}^{4}}\left( \left| A \right| \right)+{{g}^{4}}\left( \left| {{A}^{*}} \right| \right) \right\|+\frac{1}{4}\left\| {{f}^{2}}\left( \left| A \right| \right){{g}^{2}}\left( \left| {{A}^{*}} \right| \right)+{{g}^{2}}\left( \left| {{A}^{*}} \right| \right){{f}^{2}}\left( \left| A \right| \right) \right\| \\ 
 & \le \frac{1}{2}\left\| {{f}^{4}}\left( \left| A \right| \right)+{{g}^{4}}\left( \left| {{A}^{*}} \right| \right) \right\|.  
\end{aligned}
\end{equation}
\end{theorem}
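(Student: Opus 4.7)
The plan is to push the proof of Theorem \ref{0000} one step further: instead of stopping at the linear bound $\tfrac{1}{2}\langle (f^2(|A|)+g^2(|A^*|))x,x\rangle$, I would square it and then invoke the Cauchy--Schwarz inequality at the inner-product level in order to reach the quadratic expression $\langle (f^2(|A|)+g^2(|A^*|))^2 x,x\rangle$. Expanding that square produces exactly the four-term combination in \eqref{8}.

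First, fix a unit vector $x\in\mathscr{H}$ and apply Lemma \ref{08} with $y=x$ to obtain $|\langle Ax,x\rangle|\le \|f(|A|)x\|\,\|g(|A^*|)x\|$. By the scalar arithmetic--geometric mean inequality this is bounded by $\tfrac{1}{2}(\|f(|A|)x\|^2+\|g(|A^*|)x\|^2) = \tfrac{1}{2}\langle (f^2(|A|)+g^2(|A^*|))x,x\rangle$. Squaring this scalar inequality and then using the standard fact that $\langle Tx,x\rangle^2\le \langle T^2x,x\rangle$ whenever $T$ is self-adjoint and $\|x\|=1$ (which is just Cauchy--Schwarz applied to $\langle Tx,x\rangle=\langle Tx,x\rangle\cdot 1$), with $T=f^2(|A|)+g^2(|A^*|)$, yields
\[|\langle Ax,x\rangle|^2 \le \tfrac{1}{4}\langle (f^2(|A|)+g^2(|A^*|))^2 x,x\rangle.\]
Expanding the square gives $f^4(|A|)+g^4(|A^*|)+f^2(|A|)g^2(|A^*|)+g^2(|A^*|)f^2(|A|)$; taking the supremum over unit $x$ and then applying the triangle inequality to split the sum into its ``diagonal'' and ``cross'' pieces delivers the first inequality of \eqref{8}.

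For the second inequality, the task reduces to showing that $\|PQ+QP\|\le \|P^2+Q^2\|$ with $P=f^2(|A|)$ and $Q=g^2(|A^*|)$, both positive. This I would obtain from the two operator inequalities $(P-Q)^2\ge 0$ and $(P+Q)^2\ge 0$, which rearrange to $-(P^2+Q^2)\le PQ+QP\le P^2+Q^2$. Since $PQ+QP$ is self-adjoint and is sandwiched between $\pm(P^2+Q^2)$, its norm (equal to $\sup_{\|x\|=1}|\langle (PQ+QP)x,x\rangle|$) is majorized by $\sup_{\|x\|=1}\langle (P^2+Q^2)x,x\rangle = \|P^2+Q^2\|$, which is the desired bound.

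The main obstacle is not in the individual estimates, which are essentially elementary, but in choosing the right intermediate form. One has to resist bounding $\|f(|A|)x\|\,\|g(|A^*|)x\|$ directly by $\tfrac{1}{2}(\|f(|A|)x\|^2+\|g(|A^*|)x\|^2)$ and then \emph{squaring the operator inequality}; that route loses the cross terms. The correct move is to square the \emph{scalar} inequality $|\langle Ax,x\rangle|\le \tfrac{1}{2}\langle (f^2(|A|)+g^2(|A^*|))x,x\rangle$ first, and only afterwards promote it to an operator-level statement via $\langle Tx,x\rangle^2\le \langle T^2x,x\rangle$; this is what produces the anticommutator $f^2(|A|)g^2(|A^*|)+g^2(|A^*|)f^2(|A|)$ that refines the cruder bound $\tfrac{1}{2}\|f^4(|A|)+g^4(|A^*|)\|$.
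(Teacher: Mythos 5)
Your proposal is correct and follows essentially the same route as the paper: Lemma \ref{08}, the arithmetic--geometric mean inequality, the Cauchy--Schwarz step $\langle Tx,x\rangle^{2}\le\langle T^{2}x,x\rangle$ for self-adjoint $T$, expansion of the square, and the triangle inequality for the operator norm. The only cosmetic difference is in the second inequality of \eqref{8}: the paper writes $PQ+QP=\tfrac{1}{2}\bigl((P+Q)^{2}-(P-Q)^{2}\bigr)$ and invokes the inequality $\left\| S-T \right\|\le\left\| S+T \right\|$ for positive $S,T$ from \cite{2}, whereas you derive the same bound directly from the sandwich $-(P^{2}+Q^{2})\le PQ+QP\le P^{2}+Q^{2}$ together with $\left\| R \right\|=\sup_{\left\| x \right\|=1}\left| \left\langle Rx,x \right\rangle \right|$ for self-adjoint $R$ --- which is precisely the proof of that cited inequality, so the two arguments coincide in substance.
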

\begin{proof}
On making use of Lemma \ref{08}, we have
\[\begin{aligned}
  & {{\left| \left\langle Ax,x \right\rangle  \right|}^{2}} \\ 
 & \le \left\langle {{f}^{2}}\left( \left| A \right| \right)x,x \right\rangle \left\langle {{g}^{2}}\left( \left| {{A}^{*}} \right| \right)x,x \right\rangle  \\ 
 & \le {{\left( \frac{\left\langle {{f}^{2}}\left( \left| A \right| \right)x,x \right\rangle +\left\langle {{g}^{2}}\left( \left| {{A}^{*}} \right| \right)x,x \right\rangle }{2} \right)}^{2}} \\ 
 &\qquad \text{(by the arithmetic-geometric mean inequality)} \\
 & \le \frac{1}{4}{{\left\langle \left( {{f}^{2}}\left( \left| A \right| \right)+{{g}^{2}}\left( \left| {{A}^{*}} \right| \right) \right)x,x \right\rangle }^{2}} \\ 
 & \le \frac{1}{4}\left\langle {{\left( {{f}^{2}}\left( \left| A \right| \right)+{{g}^{2}}\left( \left| {{A}^{*}} \right| \right) \right)}^{2}}x,x \right\rangle  \\ 
  &\qquad \text{(by the Cauchy-Schwarz inequality)} \\
 & = \frac{1}{4}\left\langle \left( {{f}^{4}}\left( \left| A \right| \right)+{{g}^{4}}\left( \left| {{A}^{*}} \right| \right)+{{f}^{2}}\left( \left| A \right| \right){{g}^{2}}\left( \left| {{A}^{*}} \right| \right)+{{g}^{2}}\left( \left| {{A}^{*}} \right| \right){{f}^{2}}\left( \left| A \right| \right) \right)x,x \right\rangle  \\ 
 & \le \frac{1}{4}\left\| {{f}^{4}}\left( \left| A \right| \right)+{{g}^{4}}\left( \left| {{A}^{*}} \right| \right)+{{f}^{2}}\left( \left| A \right| \right){{g}^{2}}\left( \left| {{A}^{*}} \right| \right)+{{g}^{2}}\left( \left| {{A}^{*}} \right| \right){{f}^{2}}\left( \left| A \right| \right) \right\| \\ 
 & \le \frac{1}{4}\left\| {{f}^{4}}\left( \left| A \right| \right)+{{g}^{4}}\left( \left| {{A}^{*}} \right| \right) \right\|+\frac{1}{4}\left\| {{f}^{2}}\left( \left| A \right| \right){{g}^{2}}\left( \left| {{A}^{*}} \right| \right)+{{g}^{2}}\left( \left| {{A}^{*}} \right| \right){{f}^{2}}\left( \left| A \right| \right) \right\|  
   \\&\qquad \text{(by the triangle inequality for the usual operator norm)} .
\end{aligned}\]
Thus,
\[{{\left| \left\langle Ax,x \right\rangle  \right|}^{2}}\le \frac{1}{4}\left\| {{f}^{4}}\left( \left| A \right| \right)+{{g}^{4}}\left( \left| {{A}^{*}} \right| \right) \right\|+\frac{1}{4}\left\| {{f}^{2}}\left( \left| A \right| \right){{g}^{2}}\left( \left| {{A}^{*}} \right| \right)+{{g}^{2}}\left( \left| {{A}^{*}} \right| \right){{f}^{2}}\left( \left| A \right| \right) \right\|.\]
Now, taking the supremum over $x\in \mathscr H$ with $\left\| x \right\|=1$ in the above inequality produces
\begin{equation}\label{12}
{{\omega }^{2}}\left( A \right)\le \frac{1}{4}\left\| {{f}^{4}}\left( \left| A \right| \right)+{{g}^{4}}\left( \left| {{A}^{*}} \right| \right) \right\|+\frac{1}{4}\left\| {{f}^{2}}\left( \left| A \right| \right){{g}^{2}}\left( \left| {{A}^{*}} \right| \right)+{{g}^{2}}\left( \left| {{A}^{*}} \right| \right){{f}^{2}}\left( \left| A \right| \right) \right\|.
\end{equation}
On the other hand,
\begin{equation}\label{18}
\begin{aligned}
  & \left\| {{f}^{2}}\left( \left| A \right| \right){{g}^{2}}\left( \left| {{A}^{*}} \right| \right)+{{g}^{2}}\left( \left| {{A}^{*}} \right| \right){{f}^{2}}\left( \left| A \right| \right) \right\| \\ 
 & =\frac{1}{2}\left\| {{\left( {{f}^{2}}\left( \left| A \right| \right)+{{g}^{2}}\left( \left| {{A}^{*}} \right| \right) \right)}^{2}}-{{\left( {{f}^{2}}\left( \left| A \right| \right)-{{g}^{2}}\left( \left| {{A}^{*}} \right| \right) \right)}^{2}} \right\| \\ 
 & \le \frac{1}{2}\left\| {{\left( {{f}^{2}}\left( \left| A \right| \right)+{{g}^{2}}\left( \left| {{A}^{*}} \right| \right) \right)}^{2}}+{{\left( {{f}^{2}}\left( \left| A \right| \right)-{{g}^{2}}\left( \left| {{A}^{*}} \right| \right) \right)}^{2}} \right\| \\ 
 & =\left\| {{f}^{4}}\left( \left| A \right| \right)+{{g}^{4}}\left( \left| {{A}^{*}} \right| \right) \right\| \\ 
\end{aligned}
\end{equation}
in which the last inequality is a direct consequence of the following result (see, e.g.,  \cite[Corollary 1]{2}):
\begin{equation}\label{14}
\left\| S-T \right\|\le \left\| S+T \right\|,
\end{equation}
where $S,T\in \mathbb{B}\left( \mathscr{H} \right)$ are two positive operators. Combining the relations \eqref{12} and \eqref{18}, we get \eqref{8}.
\end{proof}

\begin{remark}
If we take $f\left( t \right)={{t}^{1-v}}$ and $g\left( t \right)={{t}^{v}}$ with $0\le v\le 1$, in the inequality \eqref{8}, then we get
\[\begin{aligned}
   {{\omega }^{2}}\left( A \right)&\le \frac{1}{4}\left\| {{\left| A \right|}^{4\left( 1-v \right)}}+{{\left| {{A}^{*}} \right|}^{4v}} \right\|+\frac{1}{4}\left\| {{\left| A \right|}^{2\left( 1-v \right)}}{{\left| {{A}^{*}} \right|}^{2v}}+{{\left| {{A}^{*}} \right|}^{2v}}{{\left| A \right|}^{2\left( 1-v \right)}} \right\| \\ 
 & \le \frac{1}{2}\left\| {{\left| A \right|}^{4\left( 1-v \right)}}+{{\left| {{A}^{*}} \right|}^{4v}} \right\|.  
\end{aligned}\]
In particular,
\[{{\omega }^{2}}\left( A \right)\le \frac{1}{4}\left\| {{\left| A \right|}^{2}}+{{\left| {{A}^{*}} \right|}^{2}} \right\|+\frac{1}{4}\left\| \left| A \right|\left| {{A}^{*}} \right|+\left| {{A}^{*}} \right|\left| A \right| \right\|\le \frac{1}{2}\left\| {{\left| A \right|}^{2}}+{{\left| {{A}^{*}} \right|}^{2}} \right\|.\]
\end{remark}

\begin{remark}
For normal operator $A$, we get on both sides of 
\begin{equation}\label{25}
{{\omega }^{2}}\left( A \right)\le \frac{1}{4}\left( \left\| {{\left| A \right|}^{2}}+{{\left| {{A}^{*}} \right|}^{2}} \right\|+\left\| \left| A \right|\left| {{A}^{*}} \right|+\left| {{A}^{*}} \right|\left| A \right| \right\| \right),
\end{equation}
 the same quantity ${{\left\| A \right\|}^{2}}$, which shows that the constant ${1}/{4}\;$
is best possible in general in the inequality \eqref{25}.
\end{remark}

The following corollary shows that inequality \eqref{25} is also
sharper than the inequality \eqref{02}.
\begin{corollary}
Let $A\in \mathbb{B}\left( \mathscr{H} \right)$, then
\begin{equation}\label{26}
\omega \left( A \right)\le \frac{1}{2}\sqrt{\left\| {{\left| A \right|}^{2}}+{{\left| {{A}^{*}} \right|}^{2}} \right\|+\left\| \left| A \right|\left| {{A}^{*}} \right|+\left| {{A}^{*}} \right|\left| A \right| \right\|}\le \frac{1}{2}\left( \left\| A \right\|+{{\left\| {{A}^{2}} \right\|}^{\frac{1}{2}}} \right).
\end{equation}
\end{corollary}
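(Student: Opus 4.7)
The plan is to derive both inequalities of \eqref{26} essentially for free from results already at hand. For the left inequality, I would simply take square roots in \eqref{25} (which is the special case $f(t)=g(t)=\sqrt{t}$ of Theorem \ref{16} already isolated in the preceding remark); since the right-hand side of \eqref{25} is nonnegative, this immediately delivers
\[
\omega(A)\le \frac{1}{2}\sqrt{\||A|^2+|A^*|^2\|+\||A||A^*|+|A^*||A|\|},
\]
with no further work.

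For the right inequality, I would square both sides, reducing the problem to the norm estimate
\[
\||A|^2+|A^*|^2\|+\||A||A^*|+|A^*||A|\|\le \|A\|^2+2\|A\|\|A^2\|^{1/2}+\|A^2\|,
\]
and bound the two summands on the left separately. The ``In particular'' part of Lemma \ref{17} directly gives $\||A|^2+|A^*|^2\|\le \|A^2\|+\|A\|^2$. For the second summand, the triangle inequality yields $\||A||A^*|+|A^*||A|\|\le 2\||A||A^*|\|$, and I would then invoke the identity $\||A||A^*|\|=\|A^2\|$ to convert this into $2\|A^2\|$. Summing the two bounds produces the upper estimate $\|A\|^2+3\|A^2\|$, and an appeal to the submultiplicative inequality $\|A^2\|^{1/2}\le \|A\|$ upgrades $2\|A^2\|$ to $2\|A\|\|A^2\|^{1/2}$, which is exactly what is needed. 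A final square root finishes the argument.

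The only step that is not pure bookkeeping, and therefore the main obstacle, is the identity $\||A||A^*|\|=\|A^2\|$. I would establish it using the $C^*$-identity $\|CC^*\|=\|C^*C\|=\|C\|^2$ applied to the operator $C=|A|A$: on one hand, $CC^*=|A|AA^*|A|=|A||A^*|^2|A|$ has norm $\||A||A^*|\|^2$; on the other hand, $C^*C=A^*|A|^2 A=(A^2)^*A^2$ has norm $\|A^2\|^2$, and equating the two expressions yields the claimed identity. With this lemma in place the rest of the proof is routine.
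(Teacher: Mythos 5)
Your proposal is correct and follows essentially the same route as the paper: square roots of \eqref{25}, then Lemma \ref{17} for the first summand, the triangle inequality plus $\left\| \left| A \right|\left| {{A}^{*}} \right| \right\|=\left\| {{A}^{2}} \right\|$ for the second, and finally $\left\| {{A}^{2}} \right\|\le \left\| A \right\|{{\left\| {{A}^{2}} \right\|}^{\frac{1}{2}}}$ to complete the square. The only difference is that you supply a proof of the identity $\left\| \left| A \right|\left| {{A}^{*}} \right| \right\|=\left\| {{A}^{2}} \right\|$ via the ${{C}^{*}}$-identity, which the paper simply asserts; your derivation of it is valid.
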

\begin{proof}
Observe that
\begin{equation}\label{15}
\begin{aligned}
   &\left\| \left| A \right|\left| {{A}^{*}} \right|+\left| {{A}^{*}} \right|\left| A \right| \right\|\\
  & \le \left\| \left| A \right|\left| {{A}^{*}} \right| \right\|+\left\| \left| {{A}^{*}} \right|\left| A \right| \right\| \quad \text{(by the triangle inequality for the usual operator norm)}\\ 
 & =\left\| \left| A \right|\left| {{A}^{*}} \right| \right\|+\left\| {{\left( \left| A \right|\left| {{A}^{*}} \right| \right)}^{*}} \right\| \\ 
 & =2\left\| \left| A \right|\left| {{A}^{*}} \right| \right\| \quad \text{(since $\left\| T \right\|=\left\| {{T}^{*}} \right\|$ for any $T\in \mathbb{B}\left( \mathscr{H} \right)$)}\\ 
 & =2\left\| {{A}^{2}} \right\|\quad \text{(since $\left\| \left| A \right|\left| {{A}^{*}} \right| \right\|=\left\| {{A}^{2}} \right\|$)}.  
\end{aligned}
\end{equation}
On the other hand,
\[\begin{aligned}
   {{\omega }^{2}}\left( A \right)&\le \frac{1}{4}\left( \left\| {{\left| A \right|}^{2}}+{{\left| {{A}^{*}} \right|}^{2}} \right\|+\left\| \left| A \right|\left| {{A}^{*}} \right|+\left| {{A}^{*}} \right|\left| A \right| \right\| \right) \\ 
 & \le \frac{1}{4}\left( \left\| {{A}^{2}} \right\|+{{\left\| A \right\|}^{2}} \right)+\frac{1}{4}\left\| \left| A \right|\left| {{A}^{*}} \right|+\left| {{A}^{*}} \right|\left| A \right| \right\| \quad \text{(by Lemma \ref{17})}\\ 
 & \le\frac{1}{4}\left( \left\| {{A}^{2}} \right\|+{{\left\| A \right\|}^{2}} \right)+\frac{1}{2}\left\| {{A}^{2}} \right\| \quad \text{(by \eqref{15})}\\ 
 & =\frac{1}{4}\left( {{\left\| A \right\|}^{2}}+3\left\| {{A}^{2}} \right\| \right) \\ 
  & \le \frac{1}{4}\left( {{\left\| A \right\|}^{2}}+2\left\| A \right\|{{\left\| {{A}^{2}} \right\|}^{\frac{1}{2}}}+\left\| {{A}^{2}} \right\| \right) \quad \text{(since $\left\| {{A}^{2}} \right\|={{\left\| {{A}^{2}} \right\|}^{\frac{1}{2}}}{{\left\| {{A}^{2}} \right\|}^{\frac{1}{2}}}\le \left\| A \right\|{{\left\| {{A}^{2}} \right\|}^{\frac{1}{2}}}$)}\\ 
 & =\frac{1}{4}{{\left( \left\| A \right\|+{{\left\| {{A}^{2}} \right\|}^{\frac{1}{2}}} \right)}^{2}},  
\end{aligned}\]
and yields validity of inequality \eqref{26}.
\end{proof}

\begin{remark}
It follows from the work in \cite{satary} that
\begin{equation}\label{24}
{{\omega }^{2}}\left( A \right)\le \frac{1}{4}\left\| {{\left| A \right|}^{2}}+{{\left| {{A}^{*}} \right|}^{2}} \right\|+\frac{1}{2}\omega \left( \left| A \right| \left| A^{*} \right| \right).
\end{equation}
On the other hand, from the inequality \eqref{25}, we get
\[\begin{aligned}
   {{\omega }^{2}}\left( A \right)&\le \frac{1}{4}\left( \left\| {{\left| A \right|}^{2}}+{{\left| {{A}^{*}} \right|}^{2}} \right\|+\left\| \left| A \right|\left| {{A}^{*}} \right|+\left| {{A}^{*}} \right|\left| A \right| \right\| \right) \\ 
&=\frac{1}{4}\left( \left\| {{\left| A \right|}^{2}}+{{\left| {{A}^{*}} \right|}^{2}} \right\|+\omega\left( \left| A \right|\left| {{A}^{*}} \right|+\left| {{A}^{*}} \right|\left| A \right| \right) \right) \\ 
&\qquad \text{(since $\left| A \right| \left| A^{*} \right| + \left| A^{*} \right| \left| A \right|$ is self-adjoint)}\\
& \le \frac{1}{4}\left( \left\| {{\left| A \right|}^{2}}+{{\left| {{A}^{*}} \right|}^{2}} \right\|+\omega\left( \left| A \right|\left| {{A}^{*}} \right|\right)+\omega\left(\left| {{A}^{*}} \right|\left| A \right| \right) \right)\\
 &\qquad \text{(by the triangle inequality for $\omega\left( \cdot \right)$)}\\ 
& = \frac{1}{4}\left( \left\| {{\left| A \right|}^{2}}+{{\left| {{A}^{*}} \right|}^{2}} \right\|+\omega\left( \left| A \right|\left| {{A}^{*}} \right|\right)+\omega\left(\left(\left| {{A}} \right|\left| A^{*} \right| \right)^{*} \right)\right) \\
& = \frac{1}{4}\left\| {{\left| A \right|}^{2}}+{{\left| {{A}^{*}} \right|}^{2}} \right\|+\frac{1}{2}\omega\left( \left| A \right|\left| {{A}^{*}} \right|\right)\quad \text{(since $\omega\left( T\right) = \omega\left( T^{*} \right)$ for any $T \in \mathbb{B}(\mathscr{H})$)}.
\end{aligned}\]
 It is clear that  inequality \eqref{25} refines the inequality \eqref{24}.
\end{remark}

\begin{remark}
From the inequality \eqref{15} and the first inequality in \eqref{0}, we infer that
	\[\frac{1}{4}\left\| \left| A \right|\left| {{A}^{*}} \right|+\left| {{A}^{*}} \right|\left| A \right| \right\|\le \frac{1}{2}\left\| {{A}^{2}} \right\|\le \omega \left( {{A}^{2}} \right).\]
Therefore, by inequality \eqref{25}, we can write
\[{{\omega }^{2}}\left( A \right)\le \frac{1}{4}\left\| {{\left| A \right|}^{2}}+{{\left| {{A}^{*}} \right|}^{2}} \right\|+\omega \left( {{A}^{2}} \right).\]
This can be regarded as a reverse of the inequality \eqref{06}, when $n=2$.
\end{remark}

We can obtain a refinement of the triangle inequality as follows.
\begin{proposition}\label{1}
Let $A,B\in \mathbb{B}\left( \mathscr{H} \right)$, then
\begin{equation}\label{6}
\begin{aligned}
   \left\| A+B \right\|&\le \sqrt{\left\| {{\left| A \right|}^{2}}+{{\left| B \right|}^{2}} \right\|+\left\| {{A}^{*}}B+{{B}^{*}}A \right\|} \\ 
 & \le \sqrt{{{\left\| A \right\|}^{2}}+{{\left\| B \right\|}^{2}}+2\left\| {{A}^{*}}B \right\|} \\ 
 & \le \left\| A \right\|+\left\| B \right\|.  
\end{aligned}
\end{equation}
\end{proposition}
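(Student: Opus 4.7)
The plan is to reduce everything to the identity
\[|A+B|^2 = (A+B)^*(A+B) = |A|^2 + |B|^2 + A^*B + B^*A,\]
and then apply the triangle inequality in three successive forms. Taking norms and using $\|A+B\|^2 = \| |A+B|^2 \|$, the triangle inequality for the operator norm immediately gives
\[\|A+B\|^2 \le \| |A|^2+|B|^2 \| + \| A^*B + B^*A \|,\]
which yields the first inequality in \eqref{6} after extracting a square root.

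For the second inequality, I would estimate the two summands separately. On one hand, another application of the triangle inequality together with the identity $\| |T|^2 \| = \|T\|^2$ gives $\| |A|^2 + |B|^2 \| \le \|A\|^2 + \|B\|^2$. On the other hand, writing $B^*A = (A^*B)^*$ and using that $\|T^*\| = \|T\|$, one has $\| A^*B + B^*A \| \le \|A^*B\| + \|B^*A\| = 2\|A^*B\|$. Summing these two bounds inside the square root produces the second inequality.

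For the final inequality, the submultiplicativity of the operator norm gives $\|A^*B\| \le \|A^*\|\,\|B\| = \|A\|\,\|B\|$, so
\[\|A\|^2 + \|B\|^2 + 2\|A^*B\| \le \|A\|^2 + 2\|A\|\,\|B\| + \|B\|^2 = (\|A\|+\|B\|)^2,\]
and taking the square root closes the chain. I do not expect any genuine obstacle here; the proposition is essentially a bookkeeping exercise, with the only substantive input being the observation that $\| |A+B|^2 \|$ can be split into a ``diagonal'' part $|A|^2 + |B|^2$ and a ``cross'' part $A^*B + B^*A$, each of which admits a sharper estimate than the naive $\|A\|^2 + 2\|A\|\|B\| + \|B\|^2$ one would obtain by applying the triangle inequality to $\|A+B\|$ at the outset.
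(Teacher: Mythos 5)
Your proposal is correct and follows essentially the same route as the paper: expand $\|A+B\|^2 = \bigl\| |A|^2+|B|^2+A^*B+B^*A \bigr\|$, split by the triangle inequality, use $\|T^*\|=\|T\|$ to get the factor $2\|A^*B\|$, and finish with submultiplicativity. No gaps.
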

\begin{proof}
We can write,
\begin{align}
   {{\left\| A+B \right\|}^{2}}&=\left\| {{\left| A \right|}^{2}}+{{\left| B \right|}^{2}}+{{A}^{*}}B+{{B}^{*}}A \right\| \nonumber\\ 
 & \le \left\| {{\left| A \right|}^{2}}+{{\left| B \right|}^{2}} \right\|+\left\| {{A}^{*}}B+{{B}^{*}}A \right\| \nonumber\\
 &\qquad \text{(by the triangle inequality for the usual operator norm)}\label{23}\\ 
 & \le {{\left\| A \right\|}^{2}}+{{\left\| B \right\|}^{2}}+\left\| {{A}^{*}}B \right\|+\left\| {{B}^{*}}A \right\| \nonumber\\ 
 &\qquad \text{(by the triangle inequality for the usual operator norm)}\nonumber\\
 & ={{\left\| A \right\|}^{2}}+{{\left\| B \right\|}^{2}}+\left\| {{A}^{*}}B \right\|+\left\| {{\left( {{A}^{*}}B \right)}^{*}} \right\| \nonumber\\ 
 & ={{\left\| A \right\|}^{2}}+{{\left\| B \right\|}^{2}}+2\left\| {{A}^{*}}B \right\| \quad \text{(since $\left\| T \right\|=\left\| {{T}^{*}} \right\|$ for any $T\in \mathbb{B}\left( \mathscr{H} \right)$)}\nonumber\\ 
 & \le {{\left\| A \right\|}^{2}}+{{\left\| B \right\|}^{2}}+2\left\| {{A}^{*}} \right\|\left\| B \right\|\nonumber\\
 & \qquad \text{(by the submultiplicative property of usual operator norm)}\nonumber\\ 
 & ={{\left\| A \right\|}^{2}}+{{\left\| B \right\|}^{2}}+2\left\| A \right\|\left\| B \right\| \nonumber\\ 
 & ={{\left( \left\| A \right\|+\left\| B \right\| \right)}^{2}}.  \nonumber
\end{align}
This shows that
\[\begin{aligned}
   {{\left\| A+B \right\|}^{2}}&\le \left\| {{\left| A \right|}^{2}}+{{\left| B \right|}^{2}} \right\|+\left\| {{A}^{*}}B+{{B}^{*}}A \right\| \\ 
 & \le {{\left\| A \right\|}^{2}}+{{\left\| B \right\|}^{2}}+2\left\| {{A}^{*}}B \right\| \\ 
 & \le {{\left( \left\| A \right\|+\left\| B \right\| \right)}^{2}}.  
\end{aligned}\]
Taking the square root in this inequality we obtain \eqref{6}.
\end{proof}

Our refinement of the inequality \eqref{01} is presented in the following theorem.
\begin{theorem}
Let $A\in \mathbb{B}\left( \mathscr{H} \right)$, then
\[\frac{1}{4}\left\| {{\left| A \right|}^{2}}+\left| {{A}^{*}} \right|^2 \right\|\le \frac{1}{2}\sqrt{2{{\omega }^{4}}\left( A \right)+\frac{1}{8}\left\| {{\left( A+{{A}^{*}} \right)}^{2}}{{\left( A-{{A}^{*}} \right)}^{2}} \right\|}\le {{\omega }^{2}}\left( A \right).\]
\end{theorem}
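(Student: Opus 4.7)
My plan is to reduce the whole chain to two simple inequalities by passing to the Cartesian decomposition of $A$. Write $A=P+iQ$ with $P=(A+A^{*})/2$ and $Q=(A-A^{*})/(2i)$, so that $P,Q$ are self-adjoint. A direct expansion gives
\[
|A|^{2}+|A^{*}|^{2}=A^{*}A+AA^{*}=2(P^{2}+Q^{2}),
\]
and, since $A+A^{*}=2P$ and $A-A^{*}=2iQ$,
\[
(A+A^{*})^{2}(A-A^{*})^{2}=(2P)^{2}(2iQ)^{2}=-16\,P^{2}Q^{2},
\]
hence $\|(A+A^{*})^{2}(A-A^{*})^{2}\|=16\|P^{2}Q^{2}\|$. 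Substituting these identities into the statement and squaring throughout shows that the whole chain is equivalent to the conjunction
\[
\|P^{2}+Q^{2}\|^{2}\le 2\omega^{4}(A)+2\|P^{2}Q^{2}\|\quad\text{and}\quad \|P^{2}Q^{2}\|\le \omega^{4}(A).
\]

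For the right-hand inequality I would invoke the well-known bound $\|P\|,\|Q\|\le \omega(A)$: since $P$ is self-adjoint, $\|P\|=\sup_{\|x\|=1}|\langle Px,x\rangle|=\sup_{\|x\|=1}|\operatorname{Re}\langle Ax,x\rangle|\le \omega(A)$, and similarly for $Q$. Submultiplicativity of the operator norm then yields
\[
\|P^{2}Q^{2}\|\le \|P\|^{2}\|Q\|^{2}\le \omega^{4}(A),
\]
which handles the right half of the stated chain.

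For the left-hand inequality, observe that $P^{2}+Q^{2}$ is positive, so $\|P^{2}+Q^{2}\|^{2}=\|(P^{2}+Q^{2})^{2}\|$. Expanding the square and applying the triangle inequality splits this as
\[
\|P^{4}+Q^{4}\|+\|P^{2}Q^{2}+Q^{2}P^{2}\|.
\]
The first term is at most $\|P\|^{4}+\|Q\|^{4}\le 2\omega^{4}(A)$, and the second is at most $2\|P^{2}Q^{2}\|$, using $Q^{2}P^{2}=(P^{2}Q^{2})^{*}$ together with $\|T^{*}\|=\|T\|$. Summing these two estimates gives exactly the required bound, and taking square roots restores the form of the statement.

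There is no serious obstacle. The decisive step is to perform the Cartesian decomposition at the very beginning, because this collapses both $|A|^{2}+|A^{*}|^{2}$ and $(A+A^{*})^{2}(A-A^{*})^{2}$ into expressions involving only $P^{2}$ and $Q^{2}$; after that, the proof is a routine combination of the triangle inequality, submultiplicativity, and the standard estimate $\|\operatorname{Re}(A)\|,\|\operatorname{Im}(A)\|\le \omega(A)$.
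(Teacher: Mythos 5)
Your proposal is correct and follows essentially the same route as the paper: the paper also starts from the Cartesian decomposition $A=B+iC$, uses $\|B\|,\|C\|\le\omega(A)$, and bounds $\|B^{2}+C^{2}\|$ by $\sqrt{\|B\|^{4}+\|C\|^{4}+2\|B^{2}C^{2}\|}$ via its Proposition on the refined triangle inequality applied to $B^{2}$ and $C^{2}$, which is precisely the expansion of $(P^{2}+Q^{2})^{2}$ that you carry out by hand. The only cosmetic difference is that you inline that proposition and make the identities $|A|^{2}+|A^{*}|^{2}=2(P^{2}+Q^{2})$ and $\|(A+A^{*})^{2}(A-A^{*})^{2}\|=16\|P^{2}Q^{2}\|$ explicit, which the paper leaves implicit.
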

\begin{proof}
Let $A=B+iC$ be the Cartesian decomposition of $A$. Then $B$
and $C$ are self-adjoint and
\[{{\left| \left\langle Ax,x \right\rangle  \right|}^{2}}={{\left\langle Bx,x \right\rangle }^{2}}+{{\left\langle Cx,x \right\rangle }^{2}}.\]
A little calculation shows that
\begin{equation}\label{27}
\left\| B \right\|\le \omega \left( A \right),
\end{equation}
and similarly
\begin{equation}\label{28}
\left\| C \right\|\le \omega \left( A \right).
\end{equation}
Now, by using Proposition \ref{1} we have
\[\begin{aligned}
   \frac{1}{4}\left\| {{\left| A \right|}^{2}}+\left| {{A}^{*}} \right|^2 \right\|&=\frac{1}{2}\left\| {{B}^{2}}+{{C}^{2}} \right\| \\ 
 & \le \frac{1}{2}\sqrt{{{\left\| B \right\|}^{4}}+{{\left\| C \right\|}^{4}}+2\left\| {{B}^{2}}{{C}^{2}} \right\|} \\
 &\qquad \text{(by the submultiplicative property of usual operator norm)} \\
 & \le \frac{1}{2}\sqrt{2{{\omega }^{4}}\left( A \right)+2\left\| {{B}^{2}}{{C}^{2}} \right\|} \quad \text{(by \eqref{27} and \eqref{28})}\\ 
 & \le \frac{1}{2}\sqrt{2{{\omega }^{4}}\left( A \right)+2{{\left\| B \right\|}^{2}}{{\left\| C \right\|}^{2}}} \\ 
 & \le {{\omega }^{2}}\left( A \right)  \quad \text{(by \eqref{27} and \eqref{28})}.
\end{aligned}\]
Thus, from the discussion above we have
\[\frac{1}{4}\left\| {{\left| A \right|}^{2}}+\left| {{A}^{*}} \right|^2 \right\|\le \frac{1}{2}\sqrt{2\left( {{\omega }^{4}}\left( A \right)+\left\| {{B}^{2}}{{C}^{2}} \right\| \right)}\le {{\omega }^{2}}\left( A \right).\]
This completes the proof.
\end{proof}

\section*{Acknowledgement}
This project was financially supported by Islamic Azad University, Mashhad Branch.

{\tiny \vskip 0.3 true cm } 

{\tiny (F. P. Najafabadi) Department of Mathematics, Mashhad Branch, Islamic Azad University, Mashhad, Iran.}

{\tiny \textit{E-mail address:}  	jamatia.math@gmail.com}

{\tiny \vskip 0.3 true cm } 

{\tiny (H. R. Moradi) Department of Mathematics, Mashhad Branch, Islamic Azad University, Mashhad, Iran.}

{\tiny \textit{E-mail address:} hrmoradi@mshdiau.ac.ir }


\begin{thebibliography}{9}
\bibitem{berger}
C. Berger, {\it A strange dilatation theorem}, Notices Amer. Math. Soc., {\bf12} (1965), 590.


\bibitem{bhatia}
R. Bhatia, {\it Matrix analysis}. New York: Springer-Verlag; 1997.

\bibitem{el}
M. El-Haddad and F. Kittaneh, {\it Numerical radius inequalities for Hilbert space operators. II}, Studia Math., {\bf182}(2) (2007), 133--140.




\bibitem{satary}
Z. Heydarbeygi, M. Sababheh and H. R. Moradi, {\it A convex treatment of numerical radius inequalities}, submitted.


\bibitem{2}
F. Kittaneh, {\it Norm inequalities for sums and differences
of positive operators}, Linear Algebra Appl., {\bf383} (2004), 85--91.


\bibitem{5}
F. Kittaneh, {\it A numerical radius inequality and an estimate for the numerical radius of the Frobenius companion matrix}, Studia Math., {\bf158} (2003), 11--17.

\bibitem{05}
F. Kittaneh, {\it Notes on some inequalities for Hilbert Space operators}, Publ. Res. Inst. Math. Sci., {\bf24}(2) (1988), 283--293.

\bibitem{6}
F. Kittaneh, {\it Numerical radius inequalities for Hilbert space operators}, Studia Math., {\bf168} (2005), 73--80.







\bibitem{m3}
H. R. Moradi and M. Sababheh, {\it More accurate numerical radius inequalities (II)}, Linear Multilinear Algebra., {\bf69}(5) (2021),  921--933.



\bibitem{m10}
 M. E. Omidvar and H. R. Moradi, {\it Better bounds on the numerical radii of Hilbert space operators}, Linear Algebra Appl., {\bf604} (2020),  265--277.

\bibitem{m}
 M. E. Omidvar, H. R. Moradi and K. Shebrawi, {\it Sharpening some classical numerical radius inequalities}, Oper. Matrices., {\bf12}(2) (2018),  407--416.

\bibitem{m1}
M. Sababheh and H. R. Moradi, {\it More accurate numerical radius inequalities (I)}, Linear Multilinear Algebra. https://doi.org/10.1080/03081087.2019.1651815



\bibitem{m2}
 S. Tafazoli, H. R. Moradi, S. Furuichi and P. Harikrishnan, {\it Further inequalities for the numerical radius of Hilbert space
operators}. J. Math. Inequal., {\bf13}(4) (2019), 955--967.


\end{thebibliography}
\end{document}